\numberwithin{equation}{section}
\begin{document}
\title[Weak Theorems for Symmetric Generalized Hybrid...] {Weak convergence theorems for symmetric generalized hybrid
mappings in uniformly convex Banach spaces}
\author[F. Moradlou and S. Alizadeh]{Fridoun Moradlou$^1$ and Sattar Alizadeh$^2$}
\address{\indent $^{1,2}$ Department of Mathematics
\newline \indent Sahand University of Technology
\newline \indent Tabriz, Iran}
\email{\rm $^1$ moradlou@sut.ac.ir \& fridoun.moradlou@gmail.com}
\email{\rm $^2$ sa\_alizadeh@sut.ac.ir}
\thanks{}
\begin{abstract}
In this paper, we  prove  some theorems related to properties of
generalized symmetric hybrid mappings in Banach spaces. Using Banach
limits, we prove a fixed point theorem  for  symmetric generalized
hybrid mappings in  Banach spaces. Moreover, we prove some weak
convergence theorems  for such mappings by using Ishikawa iteration
method in a uniformly convex Banach space.
\end{abstract}
\subjclass[2010]{Primary 47H10,47H09, 47J25, 47J05}

\keywords{Fixed point, Hybrid method, Opial's condition, Uniformly
convex Banach space, Weak convergence.}

\maketitle
\baselineskip=15.8pt \theoremstyle{definition}
  \newtheorem{df}{Definition}[section]
    \newtheorem{rk}[df]{Remark}
\theoremstyle{plain}
 \newtheorem{lm}[df]{Lemma}
  \newtheorem{thm}[df]{Theorem}
  \newtheorem{exa}[df]{example}
  \newtheorem{cor}[df]{Corollary}
  \setcounter{section}{0}

\section{Introduction}
Let $C$ be a nonempty, closed convex subset of a real Banach space
$E$. The self mapping $T$ of $C$ is called \textit{nonexpansive} if
$\|Tx-Ty\|\leq \|x-y\|$ for all $x,y \in C$. A point $x \in C$ is
fixed point of $T$ provided $Tx=x$. We denote by $F(T )$ the set of
fixed points of $T$.
\par
There exist some iteration processes which often used to approximate
a fixed point of a nonexpansive mapping: Picard iteration,
Krasnoselskii iteration, Halpern iteration, Mann iteration and
Ishikawa iteration. During the recent years,  Mann and Ishikawa
iterative schemes \cite{  ishikawa, mann} have been studied by a
number of authors.
\par
Let $E$ be a nonempty closed convex subset of a Banach space. In
1953, for a self mapping $T$ of $E$, Mann \cite{mann} defined the
following iteration procedure:
\begin{equation}\label{mann}
\begin{cases}
 & x_{0} \in C \,\,\,\hbox{chosen arbitrarily},\\
 & x_{n+1}  = \alpha_{n}x_{n}+(1-\alpha_{n})Tx_{n},
\end{cases}
\end{equation}
where $0\leq \alpha_{n}\leq 1$ for all $n \in \mathbb{N}\cup \{0\}$.
\par
 Let $K$ be a closed convex subset of a Hilbert space $H$.
 In 1974, for a Lipschitzian pseudocontractive self mapping $T$ of $K$, Ishikawa \cite{ishikawa}
defined the following iteration procedure:
\begin{equation}\label{ishikawa}
\begin{cases}
  & x_{0} \in C \,\,\,\hbox{chosen arbitrarily},\\
  & y_{n}= \beta_{n}x_{n}+ (1-\beta_{n})Tx_{n}, \\
  & x_{n+1}  = \alpha_{n}x_{n}+(1-\alpha_{n})Ty_{n},
\end{cases}
\end{equation}
where $0\leq \beta_{n} \leq \alpha_{n}\leq 1$ for all $n \in
\mathbb{N}\cup \{0\}$ and he proved strong convergence of the
sequence $\{x_{n}\}$ generated by the above iterative scheme if
$\lim_{n \to \infty} \beta_{n} =1$ and $\sum_{n=1}^{\infty}
(1-\alpha_{n})(1-\beta_{n}) = \infty$. By taking $\beta_{n} =1$ for
all $n \geq 0$ in $(\ref{ishikawa})$, Ishikawa iteration process
reduces to Mann iteration process.
\par
In general, to gain the convergence in Mann and Ishikawa iteration
processes, we must assume that underlying space $E$ has elegant
properties. For example, Reich \cite{Reich} proved that if $E$ is a
uniformly convex Banach space with a Fr\'{e}chet differentiable norm
and if $\{\alpha_{n}\}$ is such that $\sum_{n=1}^{\infty} \alpha_{n}
(1- \alpha_{n}) = \infty$, then the Mann iteration scheme converges
weakly to a fixed point of $T$. However, we know that the Mann
iteration process is weakly convergent even in a Hilbert space
\cite{Genel}. Also, Tan and Xu \cite{Tan} proved that if $E$ is a  a
uniformly convex Banach space which satisfies Opial's condition or
whose norm is Fr\'{e}chet differentiable and if $\{\alpha_{n}\}$ and
$\{\beta_{n}\}$ are such that $\sum_{n=1}^{\infty} \alpha_{n} (1-
\alpha_{n})$ diverges, $\sum_{n=1}^{\infty} \alpha_{n} (1-
\beta_{n}) $ converges and $\limsup \beta_{n} < 1$, then Ishikawa
iteration process converges weakly to a fixed point of $T$.
\par
It easy to see that process (\ref{ishikawa}) is more general than
the process (\ref{mann}). Also, for a Lipschitz pseudocontractive
mapping in a Hilbert space, process (\ref{mann}) is not known to
converge to a fixed point while the process (\ref{ishikawa}) is
convergent. In spite of these facts, researchers are interested to
study the convergence theorems by process (\ref{mann}), because of
the formulation of process (\ref{mann}) is simpler than that of
(\ref{ishikawa}) and if $\{\beta_{n}\}$ satisfies suitable
conditions, we can gain a convergence theorem for process
(\ref{ishikawa}) on a convergence theorem for process (\ref{mann}).
\par
In recent years, many authors have proved weak or strong convergence
theorems for some nonlinear mappings by using various iteration
processes in the framework of Hilbert spaces and Banach spaces, see,
\cite{Moradlou, Nakajo, taka2003, T5}.
\par
Let $C$ be a nonempty, closed convex subset of a real Banach
space $E$. A mapping $S$ from $C$ into $E$ is called
\textit{symmetric generalized hybrid} \cite{T4} if there exist
$\alpha, \beta ,\gamma,\delta\in \mathbb{R}$ such that
 \begin{equation}\begin{aligned}\label{inq.}
\alpha\|Sx-Sy\|^2+\beta(\|x-Sy\|^2&+\|Sx-y\|^2)+\gamma\|x-y\|^2\\
&+\delta(\|x-Tx\|^2+\|y-Sy\|^2)\leq 0,
\end{aligned}\end{equation}
for all $x,y\in C$. We call such a mapping an
$(\alpha,\beta,\gamma,\delta)$-symmetric generalized hybrid mapping.
\par
In this paper, motivated by Takahashi and Yao \cite{T5}, we  prove
some theorems related to properties of generalized symmetric hybrid
mappings in Banach spaces. Moreover,  we prove a fixed point theorem
for symmetric generalized hybrid mappings in a Banach space . Also,
we prove some weak convergence theorems for symmetric generalized
hybrid mappings in a uniformly convex Banach space.
\section{Preliminaries}
  Let $E$ be a real Banach space with $\|.\|$ and dual space $E^{*}$.
  We denote by $J$ the normalized duality mapping from $E$ into $2^{E^{*}}$ defined by
 $$Jx=\{x^{*}\in E^{*}:\; \langle x,x^{*}\rangle=\|x\|^2=\|x^{*}\|^2\},$$
 for all $x\in E$, where $\langle.,.\rangle$ denotes the generalized duality pairing between $E$ and $E^{*}$.
  $E$ is said to be strictly convex if  $\|\frac{x+y}{2}\|<1$ for all
  $x,y\in E$ with $\|x\|=\|y\|=1$ and $x\neq y.$ It is also said to be uniformly convex if for every $\epsilon\in (0,2]$,
  there exists a $\delta>0$, such that $\|\frac{x+y}{2}\|<1-\delta$ for all $x,y\in E$ with $\|x\|=\|y\|=1$ and
  $\|x-y\|\geq\epsilon$. Furthermore, $E$ is called smooth if the limit
 \begin{equation}\label{eq1}
 \lim_{t\rightarrow0}\frac{\|x+ty\|-\|x\|}{t},
 \end{equation}
 exists for all $x,y\in B_{E}=\{x\in E:\;\|x\|=1\}$. It is also said to be uniformly smooth if the limit
  $(\ref{eq1})$ is attained uniformly for all $x,y\in E$. For more details see \cite{Agarwal, T3}.
\par
Denote by $l^{\infty}$
the set of all bounded sequences equipped with supremum norm. A
continuous linear functional $\mu$ on $l^{\infty}$ is called {\it
Banach limit} if
\begin{itemize}
  \item[i)] $\mu(e)=\|e\| = 1$, where $e=(1,1,1,\cdots)$.
  \item[ii)] $\mu_{n}(x_{n}) = \mu_{n}(x_{n+1})$ for all $x = (x_1, x_2, \cdots ) \in
  l^{\infty}$, where \linebreak$ \mu_{n}(x_{n+m})= \mu(x_{m+1}, x_{m+2},x_{m+3}, \cdots, x_{m+n},
  \cdots)$.
\end{itemize}
As usual, we denote by $\mu_{n}(x_{n})$ the value of $\mu$ at $x =
(x_1, x_2, \cdots )$.  It is well known that there exists a Banach
limit on $l^{\infty}$. Let $\mu$ be a Banach limit, then
\begin{equation*}
\liminf_{n \to \infty}x_{n}\leq \mu_{n}(x_{n})\leq \limsup_{n\to
\infty} x_{n}.
\end{equation*}
Moreover, if $x_{n} \to a$, then $ \mu_{n}(x_{n})=a$. For more
details we refer readers to \cite{Agarwal, T3}.
\par
 We denote the weak convergence and the strong convergence of $\{x_n\}$ to $x\in E$ by $x_n\rightharpoonup x$
 and $x_n\rightarrow x$, respectively.
\par
A Banach space $E$  satisfies the Opial's condition if for every
sequence $\{x_n\}$ in $E$ such that $x_n\rightharpoonup x\in E$,
then
$$\liminf_{n\rightarrow\infty}\|x_n-x\|<\liminf_{n\rightarrow\infty}\|x_n-y\|,$$
for all $y\in E$, $y\neq x$.
\par
 A self mapping $T$ of $C\subseteq E$ is called:
(i) \textit{firmly nonexpansive} \cite{Bruck}, if
$\|Tx-Ty\|^{2}\leq \langle x-y,j\rangle$ for all $x,y \in C$, where $j\in J(Tx-Ty)$;
(ii)\textit{ nonspreading}, if $2\|Tx-Ty\|^{2}\leq \|Tx-y\|^{2} +
\|Ty-x\|^{2}$ for all $x,y \in E$; (iii) \textit{hybrid}, if
$3\|Tx-Ty\|^{2}\leq \|x-y\|^{2}+\|Tx-y\|^{2} + \|Ty-x\|^{2}$ for all
$x,y \in E$. Also, a  self mapping $T$ of $C$ with $F(T)\neq \emptyset$ is called \textit{quasi-nonexpansive}
if $\|x - Ty\| \leq \| x - y\|$ for all $x \in F(T)$ and $y \in C$.
\par
 It easy to see that:
\begin{itemize}
  \item $(1,0,-1,0)$-symmetric generalized hybrid mapping is
nonexpansive mapping;
  \item $(2,-1,0,0)$-symmetric generalized hybrid mapping is
nonspreading mapping;
  \item $(3,-1,-1,0)$-symmetric generalized hybrid mapping is hybrid
mapping.
\end{itemize}
  The following result is given in \cite{Agarwal, T3}.
\begin{thm}\label{thj}
Let $E$ be a Banach space and let $J$ be the duality mapping of $E$.
Then
$$\|x\|^2-\|y\|^2\geq2\langle x-y,j\rangle,$$
for all $x,y\in E$ where $j\in J_y.$
\end{thm}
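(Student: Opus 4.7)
The plan is to derive the inequality directly from the defining properties of the normalized duality mapping, namely that $j \in J(y)$ satisfies $\langle y,j\rangle = \|y\|^2 = \|j\|^2$, combined with the Cauchy--Schwarz--type bound $|\langle x,j\rangle| \leq \|x\|\,\|j\|$ and the elementary inequality $2ab \leq a^2+b^2$. There is essentially no geometric subtlety here; the statement is an algebraic reformulation of the convexity of $t \mapsto \tfrac{1}{2}\|ty+(1-t)x\|^2$ at $t=1$, but the cleanest route bypasses any variational argument.

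First I would split the inner product as
\begin{equation*}
2\langle x-y,j\rangle = 2\langle x,j\rangle - 2\langle y,j\rangle.
\end{equation*}
Using $j \in J(y)$, the second term is exactly $-2\|y\|^{2}$. For the first term I would apply the duality pairing estimate $\langle x,j\rangle \leq \|x\|\,\|j\| = \|x\|\,\|y\|$, which is immediate from the definition of $J$ (since $\|j\|=\|y\|$). This yields
\begin{equation*}
2\langle x-y,j\rangle \leq 2\|x\|\,\|y\| - 2\|y\|^{2}.
\end{equation*}

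Next I would absorb the cross term using $2\|x\|\,\|y\| \leq \|x\|^{2} + \|y\|^{2}$, a consequence of $(\|x\|-\|y\|)^{2}\geq 0$. Substituting gives
\begin{equation*}
2\langle x-y,j\rangle \leq \|x\|^{2} + \|y\|^{2} - 2\|y\|^{2} = \|x\|^{2} - \|y\|^{2},
\end{equation*}
which is the desired inequality. I do not anticipate any real obstacle; the only thing to be careful about is to use the correct normalization $\langle y,j\rangle = \|y\|^{2}$ rather than $\|j\|\,\|y\|$ (which agree here but for a different reason), so that the argument does not accidentally require smoothness or uniqueness of $j$. The result holds for every $j \in J(y)$, with no extra hypothesis on $E$.
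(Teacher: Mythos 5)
Your proof is correct. The paper itself gives no argument for this theorem --- it is simply quoted from the cited references \cite{Agarwal, T3} --- and your derivation is the standard one found there: write $2\langle x-y,j\rangle = 2\langle x,j\rangle - 2\|y\|^2$, bound $\langle x,j\rangle \leq \|x\|\,\|j\| = \|x\|\,\|y\|$, and finish with $2\|x\|\,\|y\| \leq \|x\|^2+\|y\|^2$, i.e.\ the whole inequality reduces to $(\|x\|-\|y\|)^2 \geq 0$. Nothing beyond the defining properties of $J$ is needed, and the result indeed holds for every selection $j \in J(y)$ in an arbitrary Banach space.
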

 \begin{thm}\cite{Hsu}\label{th0}
 Let $C$ be a nonempty closed convex subset of a uniformly convex
Banach space $E$ and $T$ be a self mapping of $C$. Let $\{x_n\}$ be
a bounded sequence of $E$ and $\mu$ be a mean on $l^\infty$. If
$$\mu_n\|x_n-Tu\|^2\leq\mu_n\|x_n-u\|^2,$$
for all $u\in C$, then $T$ has a fixed point in $C$.
\end{thm}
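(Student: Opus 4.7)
The plan is to show that the convex continuous functional $g \colon C \to \mathbb{R}$ defined by $g(u) = \mu_n\|x_n - u\|^2$ attains its infimum on $C$ at a unique point $z$; the hypothesis then forces $Tz = z$. Indeed, since $T$ maps $C$ into itself, $Tz \in C$, and the assumed inequality $\mu_n\|x_n-Tz\|^2 \leq \mu_n\|x_n-z\|^2$ reads $g(Tz) \leq g(z)$, so $Tz$ also minimizes $g$; uniqueness then yields $Tz = z$.

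First I would verify the basic properties of $g$. Because $\{x_n\}$ is bounded and $\mu$ is a mean (a positive continuous linear functional on $l^\infty$ with $\mu(e)=1$), $g$ is well defined, convex (from convexity of $u\mapsto\|x_n-u\|^2$ and positivity of $\mu$), and continuous (via the estimate $|\|x_n-u\|^2-\|x_n-v\|^2|\leq(\|x_n-u\|+\|x_n-v\|)\|u-v\|$ together with boundedness of $\{x_n\}$). Setting $M=\sup_n\|x_n\|$ one has $g(u)\geq(\|u\|-M)^2$ whenever $\|u\|>M$, so $g$ is coercive. Since uniform convexity of $E$ entails reflexivity, every sublevel set $\{u\in C : g(u)\leq g(u_0)\}$ is bounded, closed and convex, hence weakly compact. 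Weak lower semi-continuity of the convex continuous $g$ then produces a minimizer $z\in C$.

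The main obstacle is uniqueness of this minimizer, and it is here that uniform convexity (as opposed to mere strict convexity) is essential, because the estimate must be uniform in $n$. Assume for contradiction that $u_1\neq u_2$ in $C$ both minimize $g$; set $w=\tfrac{1}{2}(u_1+u_2)\in C$ and $r=\sup_n\max(\|x_n-u_1\|,\|x_n-u_2\|)<\infty$. A standard Xu-type inequality deduced from the modulus of uniform convexity furnishes a constant $\eta = \eta(r,\|u_1-u_2\|)>0$, \emph{independent of} $n$, such that
$$\|x_n-w\|^2 \leq \tfrac{1}{2}\|x_n-u_1\|^2 + \tfrac{1}{2}\|x_n-u_2\|^2 - \eta$$
for every $n$. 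Applying the positive linear functional $\mu$ gives $g(w)\leq\tfrac{1}{2}(g(u_1)+g(u_2)) - \eta < g(u_1)$, contradicting minimality of $u_1$.

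Extracting $\eta$ uniformly in $n$ from the modulus of convexity of $E$ is the one genuinely delicate point; once it is in hand, the three steps above, namely existence, uniqueness and the self-mapping observation, combine to produce a fixed point in $C$.
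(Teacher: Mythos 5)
Your argument is correct, and it is essentially the standard proof of this result: the paper itself states the theorem without proof, citing \cite{Hsu}, and the proof given there proceeds exactly as you do, by minimizing the convex, continuous, coercive functional $g(u)=\mu_n\|x_n-u\|^2$ over $C$, establishing uniqueness of the minimizer via uniform convexity, and then observing that the hypothesis makes $Tz$ a second minimizer. The one step you flag as ``genuinely delicate''---extracting a modulus $\eta>0$ independent of $n$---is precisely Theorem~\ref{thm1} (Xu's inequality), already quoted in the paper: applying it with $t=\tfrac12$, $x=x_n-u_1$, $y=x_n-u_2$ in $B_r$ gives $\eta=\tfrac14\,g_r(\|u_1-u_2\|)>0$ uniformly in $n$, so no further work is needed there.
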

  \begin{thm}\cite{Xu}\label{thm1}
Let $E$ be a uniformly convex Banach space and let $r$ be a positive
real number. Then there exists a strictly increasing, continuous and
convex function $g :[0,\infty)\longrightarrow [0,\infty)$ such that
$g(0) = 0$ and
$$\|t x+(1-t)y\|^2\leq t\|x\|^2+(1-t)\|y\|^2-t(1-t)g(\|x-y\|)$$
for all $x, y \in B_r$ and $t$ with $0 \leq t \leq1$, where $B_r
= \{z\in E : \|z\| \leq r\}$.
\end{thm}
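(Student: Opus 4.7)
I would reduce the general $t$-case to the midpoint case $t=\tfrac12$ via the elementary convexity of $\|\cdot\|^2$, and construct $g$ from the modulus of convexity of $E$.

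For the midpoint step, set
$$\phi(\epsilon) := \inf\left\{\frac{\|x\|^2+\|y\|^2}{2} - \left\|\frac{x+y}{2}\right\|^2 \; : \; x,y\in B_r,\ \|x-y\|\ge\epsilon\right\}$$
for $\epsilon\in[0,2r]$. This is non-negative, non-decreasing, and vanishes at $0$. Since $E$ is uniformly convex, the modulus of convexity $\delta_E$ is strictly positive on $(0,2]$; applying the definition of $\delta_E$ to the normalized vectors $x/r,\,y/r$ (with a short scaling argument to accommodate points in the interior of $B_r$) yields a lower bound of the form $\phi(\epsilon)\ge r^2\,\delta_E(\epsilon/r)\,(2-\delta_E(\epsilon/r))$, which is positive for every $\epsilon\in(0,2r]$. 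To obtain regularity, I would replace $\phi$ by its greatest convex minorant on $[0,2r]$, extend affinely with positive slope to $[0,\infty)$, and if necessary add a small quadratic perturbation $\eta\epsilon^2$ to guarantee strict monotonicity. Call the resulting function $g$. Then $g:[0,\infty)\to[0,\infty)$ is continuous, convex, strictly increasing, $g(0)=0$, and the midpoint estimate
$$\left\|\frac{x+y}{2}\right\|^2 \le \frac{\|x\|^2+\|y\|^2}{2} - g(\|x-y\|)$$
holds for all $x,y\in B_r$.

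For the reduction to arbitrary $t$, take $t\in[0,\tfrac12]$ and write $tx+(1-t)y=2t\cdot\tfrac{x+y}{2}+(1-2t)y$. Applying the inequality $\|\lambda u+(1-\lambda)v\|^2\le\lambda\|u\|^2+(1-\lambda)\|v\|^2$ (valid in any Banach space, since $\|\cdot\|^2$ is convex) and substituting the midpoint estimate gives
$$\|tx+(1-t)y\|^2 \le t\|x\|^2+(1-t)\|y\|^2 - 2t\,g(\|x-y\|).$$
A symmetric decomposition for $t\in[\tfrac12,1]$ yields the same inequality with $2(1-t)$ in place of $2t$. Since $2\min(t,1-t)\ge t(1-t)$ on $[0,1]$, the stated inequality with coefficient $t(1-t)$ follows.

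The principal obstacle is the midpoint step: producing a single function $g$ which is simultaneously strictly increasing, continuous and convex while still dominating the uniform-convexity gap over all pairs in $B_r$. Positivity of $\phi$ is immediate from uniform convexity, but regularizing $\phi$ into such a $g$ without destroying the inequality is the technical heart of the argument. Once the midpoint estimate is in hand, the passage to arbitrary $t\in[0,1]$ is a short convexity manipulation.
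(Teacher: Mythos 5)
The paper itself gives no proof of this statement: it is quoted verbatim from Xu's 1991 paper, so there is no internal argument to measure yours against. On its own terms your outline is essentially correct, and the reduction from the midpoint case to general $t$ is clean and complete: the decomposition $tx+(1-t)y=2t\cdot\frac{x+y}{2}+(1-2t)y$, convexity of $\|\cdot\|^2$, and the bound $2\min(t,1-t)\ge t(1-t)$ do exactly what you say. Xu's own argument works instead with the full two--parameter quantity $\bigl(t\|x\|^2+(1-t)\|y\|^2-\|tx+(1-t)y\|^2\bigr)/\bigl(t(1-t)\bigr)$ and a characterization of uniform convexity; your midpoint reduction is more elementary and leaves only a one--variable regularization problem.

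Two cautions on what you rightly call the technical heart. First, the explicit bound $\phi(\epsilon)\ge r^2\delta_E(\epsilon/r)\bigl(2-\delta_E(\epsilon/r)\bigr)$ does not follow from the normalization you describe: the closed--ball form of uniform convexity gives $\bigl\|\frac{x+y}{2}\bigr\|\le r\bigl(1-\delta_E(\epsilon/r)\bigr)$, but then the gap is only bounded below by $\frac{\|x\|^2+\|y\|^2}{2}-r^2(1-\delta_E(\epsilon/r))^2$, which can be negative when $x,y$ are well inside $B_r$. What you actually need, and all you need, is $\phi(\epsilon)>0$; this does hold, e.g.\ because $\frac{\|x\|^2+\|y\|^2}{2}-\bigl\|\frac{x+y}{2}\bigr\|^2\ge\frac{(\|x\|-\|y\|)^2}{4}$ forces near--equal norms along any minimizing sequence, after which the closed--ball form of uniform convexity applies at scale $\max(\|x\|,\|y\|)\ge\epsilon/2$. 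Second, and more seriously, adding a perturbation $\eta\epsilon^2$ would \emph{enlarge} $g$ and could therefore destroy the very inequality $\mathrm{gap}\ge g(\|x-y\|)$ you have just secured; you must never increase $g$ at this stage. Fortunately the perturbation is unnecessary: the greatest convex minorant $\hat\phi$ satisfies $\hat\phi(\epsilon)\ge\min\bigl\{\tfrac12\phi(\epsilon/2),\,\tfrac{\epsilon}{4r}\phi(\epsilon)\bigr\}>0$ on $(0,2r]$, and a nonnegative convex function vanishing at $0$ and positive on $(0,2r]$ is automatically strictly increasing there. With those two repairs the argument is complete.
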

\section{Main results}
  \begin{thm}\label{thm3.1}
Let $E$ be real Banach space, $C$ be a nonempty closed convex subset
of $E$ and $T$ be an $(\alpha,\beta,\gamma,\delta)$-symmetric
generalized hybrid self mapping of $C$ such that $F(T)\neq\phi$  and
the conditions (1) $\alpha+2\beta+\gamma\geq0$, (2)
$\alpha+\beta>0 $ and (3) $\delta\geq0$ hold. Then $T$ is quasi-nonexpansive.
\end{thm}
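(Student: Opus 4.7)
The plan is to directly substitute a fixed point into the defining inequality (\ref{inq.}) and exploit the three sign conditions in sequence to collapse the resulting expression into the quasi-nonexpansive bound. Fix $p \in F(T)$ and an arbitrary $y \in C$; I want to show $\|p - Ty\| \leq \|p - y\|$.

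First I would apply the $(\alpha,\beta,\gamma,\delta)$-symmetric generalized hybrid inequality with $x := p$. Since $Tp = p$, the four squared norms simplify as follows: $\|Tp - Ty\|^2 = \|p - Ty\|^2$, $\|Tp - y\|^2 = \|p - y\|^2$, $\|p - Ty\|^2$ is unchanged, and $\|p - Tp\|^2 = 0$. Collecting these substitutions into (\ref{inq.}) yields
\begin{equation*}
(\alpha + \beta)\|p - Ty\|^2 + (\beta + \gamma)\|p - y\|^2 + \delta\|y - Ty\|^2 \leq 0.
\end{equation*}

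Next I would use condition (3), $\delta \geq 0$, to drop the last term, obtaining
\begin{equation*}
(\alpha + \beta)\|p - Ty\|^2 \leq -(\beta + \gamma)\|p - y\|^2.
\end{equation*}
Condition (2), $\alpha + \beta > 0$, then permits division, giving
\begin{equation*}
\|p - Ty\|^2 \leq \frac{-(\beta + \gamma)}{\alpha + \beta}\|p - y\|^2.
\end{equation*}
Finally, condition (1), $\alpha + 2\beta + \gamma \geq 0$, is exactly the statement that $-(\beta + \gamma) \leq \alpha + \beta$, so the coefficient on the right is at most $1$. Taking square roots yields $\|p - Ty\| \leq \|p - y\|$, which is quasi-nonexpansiveness.

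There is no real obstacle: each of the three hypotheses is tailored to eliminate one term or control one ratio, and the argument is essentially algebraic once the fixed point is plugged in. The only subtlety is bookkeeping — making sure the coefficients of $\|p - Ty\|^2$ and $\|p - y\|^2$ after substitution are $\alpha + \beta$ and $\beta + \gamma$ respectively — and noticing that condition (1) is precisely the inequality needed at the very last step.
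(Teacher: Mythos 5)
Your proof is correct and follows essentially the same route as the paper: substitute a fixed point for $x$ in the defining inequality, collect the coefficients $(\alpha+\beta)$ and $(\beta+\gamma)$, discard the $\delta$-term, divide by $\alpha+\beta>0$, and use $\alpha+2\beta+\gamma\geq0$ to bound the resulting ratio by $1$. The only cosmetic difference is that the paper also notes $-(\beta+\gamma)\geq0$ along the way, which your argument does not need.
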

\begin{proof}
Since $T$ is an $(\alpha,\beta,\gamma,\delta)$-symmetric generalized
hybrid self mapping of $C$, then
 \begin{equation}\begin{aligned}\label{thm1.1}
\alpha\|Tx-Ty\|^2+\beta(\|x-Ty\|^2&+\|Tx-y\|^2)+\gamma\|x-y\|^2\\
&+\delta(\|x-Tx\|^2+\|y-Ty\|^2)\leq0,
\end{aligned}\end{equation}
for all $x,y\in E$. Since $F(T)\neq\phi$, hence there exists $x\in
E$ such that $x=Tx$. So,
 \begin{equation*}
\alpha\|x-Ty\|^2+\beta(\|x-Ty\|^2+\|x-y\|^2)+\gamma\|x-y\|^2+\delta\|y-Ty\|^2\leq0,
\end{equation*}
for all $y\in E$. Therefore we can conclude that
 \begin{equation}\label{inq1.1}
(\alpha+\beta)\|x-Ty\|^2+(\beta+\gamma)\|x-y\|^2\leq0,
\end{equation}
for all $y\in E$.  It follows from condition (2) and (\ref{inq1.1})
that $-(\beta+\gamma)\geq0$. So conditions (1) and (2) imply that
 \begin{equation}\label{inq1.2}
 0\leq\frac{-(\beta+\gamma)}{\alpha+\beta}\leq1.
 \end{equation}
Then, form (\ref{inq1.1})  and (\ref{inq1.2}), we derive that  $\|x-Ty\|\leq\|x-y\|$, i.e., $T$
is quasi-nonexpansive. .
\end{proof}
\begin{thm}\label{thm3.2}
  Let $E$ be a real Banach space, $C$ be a nonempty subset of $E$ and $\zeta,\eta$ be nonnegetive real numbers.
  Then a firmly nonexpansive self  mapping  of $C$ is a $(2\zeta+\eta,-\zeta,-\eta,0)$-symmetric generalized hybrid mapping.
  \end{thm}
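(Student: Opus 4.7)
The plan is to combine the firmly nonexpansive inequality with three applications of Theorem \ref{thj}, one for each of the three squared norms appearing with weights $\zeta$ and $\eta$ in the target inequality. Writing out what must be shown, we need
\[
(2\zeta+\eta)\|Tx-Ty\|^2 \leq \zeta\bigl(\|x-Ty\|^2+\|Tx-y\|^2\bigr)+\eta\|x-y\|^2,
\]
so since $\zeta,\eta\geq 0$ it suffices to prove the two pointwise inequalities $\|x-Ty\|^2+\|Tx-y\|^2\geq 2\|Tx-Ty\|^2$ and $\|x-y\|^2\geq \|Tx-Ty\|^2$, then take their $\zeta$- and $\eta$-weighted sum.

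Fix $x,y\in C$ and pick $j\in J(Tx-Ty)$, so that $\langle Tx-Ty,j\rangle=\|Tx-Ty\|^2$. By firm nonexpansiveness, $\|Tx-Ty\|^2\leq\langle x-y,j\rangle$, which after subtracting $\|Tx-Ty\|^2$ from both sides yields the key identity
\[
\langle (x-y)-(Tx-Ty),\,j\rangle\;=\;\langle x-y,j\rangle-\|Tx-Ty\|^2\;\geq\;0.
\]
This is the single algebraic fact that will force the two pointwise inequalities. The rest is routine bookkeeping via Theorem \ref{thj}.

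Next, I apply Theorem \ref{thj} with $v=Tx-Ty$ (so the selected $j\in J_v$ is the same $j$ as above) three times, taking $u$ to be $x-Ty$, then $Tx-y$, then $x-y$. The differences $u-v$ are respectively $x-Tx$, $-(y-Ty)$, and $(x-Tx)-(y-Ty)=(x-y)-(Tx-Ty)$. Adding the first two gives
\[
\|x-Ty\|^2+\|Tx-y\|^2-2\|Tx-Ty\|^2\;\geq\;2\langle (x-y)-(Tx-Ty),\,j\rangle\;\geq\;0,
\]
and the third gives $\|x-y\|^2-\|Tx-Ty\|^2\geq 2\langle (x-y)-(Tx-Ty),j\rangle\geq 0$, using the key identity each time.

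Finally, multiplying the first resulting inequality by $\zeta\geq 0$ and the second by $\eta\geq 0$ and summing produces exactly the desired inequality, i.e.\ the defining inequality \eqref{inq.} with $(\alpha,\beta,\gamma,\delta)=(2\zeta+\eta,-\zeta,-\eta,0)$. The main (and essentially only) obstacle is to keep the same selection $j\in J(Tx-Ty)$ in all four places where it appears, since $E$ is not assumed smooth and the duality map is in general multi-valued; once one fixes $j$ at the outset this goes through transparently.
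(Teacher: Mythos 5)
Your proof is correct and follows essentially the same route as the paper's: both combine the firmly nonexpansive inequality with Theorem \ref{thj} evaluated at a fixed $j\in J(Tx-Ty)$ to obtain the two pointwise inequalities $\|x-y\|^2\geq\|Tx-Ty\|^2$ and $\|x-Ty\|^2+\|Tx-y\|^2\geq 2\|Tx-Ty\|^2$, and then take a nonnegative weighted sum. The only (immaterial) difference is that the paper attaches the weights the other way round and so literally derives the $(\zeta+2\eta,-\eta,-\zeta,0)$ version, which coincides with the stated claim after renaming $\zeta\leftrightarrow\eta$, whereas your weighting matches the statement exactly as written.
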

  \begin{proof}
  Assume that  $T$ is a firmly nonexpasive self mapping of $C$. Then we have
  $$\|Tx-Ty\|^2\leq\langle x-y,j\rangle,$$
  for all $x,y\in C$ and $j\in J(Tx-Ty).$ By using Theorem \ref{thj}  we get
  \begin{equation*}\begin{aligned}
  \|Tx-Ty\|^2\leq\langle x-y,j\rangle &\Longleftrightarrow0\leq2\langle x-Tx-(y-Ty),j\rangle\\
  &\Longrightarrow0\leq\|x-y\|^2-\|Tx-Ty\|^2\\
  &\Longleftrightarrow\|Tx-Ty\|^2\leq\|x-y\|^2\\
  &\Longleftrightarrow\|Tx-Ty\|\leq\|x-y\|.
  \end{aligned}
  \end{equation*}
  Hence for $\zeta\geq0$, we have
  \begin{equation}\label{eq1.j}
  \zeta\|Tx-Ty\|\leq\zeta\|x-y\|.
  \end{equation}
  On the other hand, for all $x,y\in C$ and $j\in J(Tx-Ty)$ we get
 \begin{equation*}\begin{aligned}
  \|Tx-Ty\|^2\leq\langle x-y,j\rangle&\Longleftrightarrow0\leq2\langle x-Tx-(y-Ty),j\rangle\\
   &\Longleftrightarrow0\leq2\langle x-Tx,j\rangle+2\langle Ty-y,j\rangle\\
  &\Longrightarrow0\leq\|x-Ty\|^2-\|Tx-Ty\|^2+\|Tx-y\|^2-\|Tx-Ty\|^2\\
   &\Longleftrightarrow0\leq\|x-Ty\|^2-\|y-Tx\|^2-2\|Tx-Ty\|^2\\
  &\Longleftrightarrow2\|Tx-Ty\|^2\leq\|x-Ty\|+\|y-Tx\|^2.
  \end{aligned}
  \end{equation*}
  So, for $\eta\geq0$ we get
  \begin{equation}\label{eq2.j}
 2\eta\|Tx-Ty\|^2\leq\eta\|x-Ty\|+\eta\|y-Tx\|^2.
  \end{equation}
  Hence, by summing both sides of $(\ref{eq1.j})$ and $(\ref{eq2.j})$  we obtain
  $$(\zeta+2\eta)\|Tx-Ty\|^2\leq\eta\|x-Ty\|^2+\eta\|y-Tx\|^2+\zeta\|x-y\|^2.$$
  and therefore
  $$(\zeta+2\eta)\|Tx-Ty\|^2-\eta(\|x-Ty\|^2+\|y-Tx\|^2)-\zeta\|x-y\|^2\leq0.$$
  This yields that $T$ is a $(\zeta+2\eta,-\eta,-\zeta,0)$-symmetric generalized hybrid mapping.
  \end{proof}
\begin{thm}\label{thm3.3}
 Let $C$ be a nonempty closed convex subset of  a real Banach space $E$ and $T$ be an
$(\alpha,\beta,\gamma,\delta)$-symmetric generalized hybrid self
mapping of $C$ and the conditions (1) $\alpha+2\beta+\gamma\geq0$, (2)
$\alpha+\beta>0 $ and (3) $\delta\geq0$ hold. Then the following are equivalent:
\begin{itemize}
\item[(i)]$F(T)\neq\phi$;
\item[(ii)]$\{T^nx\}$ is bounded for some $x\in C$.
\end{itemize}
\end{thm}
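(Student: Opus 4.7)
The plan is to split the equivalence into its two implications. For the easy direction (i) $\Rightarrow$ (ii), I would invoke Theorem \ref{thm3.1}, which under the hypotheses (1)--(3) promotes $T$ to a quasi-nonexpansive mapping. Picking any $p \in F(T)$ and any $x \in C$, iterating the quasi-nonexpansive inequality along the orbit yields $\|T^n x - p\| \leq \|T^{n-1} x - p\| \leq \cdots \leq \|x - p\|$ for every $n$, so the orbit $\{T^n x\}$ is bounded.

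For the substantive direction (ii) $\Rightarrow$ (i), I would set up a Banach-limit argument tailored to feed into Theorem \ref{th0}. Write $x_n = T^n x$ for a starting point $x \in C$ whose orbit is bounded, and fix an arbitrary $u \in C$. Applying the defining inequality (\ref{inq.}) to the pair $(x_n, u)$ and using $Tx_n = x_{n+1}$ produces
\begin{equation*}
\alpha\|x_{n+1} - Tu\|^2 + \beta(\|x_n - Tu\|^2 + \|x_{n+1} - u\|^2) + \gamma \|x_n - u\|^2 + \delta(\|x_n - x_{n+1}\|^2 + \|u - Tu\|^2) \leq 0.
\end{equation*}
Apply a Banach limit $\mu_n$ termwise and exploit its shift-invariance, which collapses $\mu_n\|x_{n+1} - Tu\|^2 = \mu_n\|x_n - Tu\|^2$ and $\mu_n\|x_{n+1} - u\|^2 = \mu_n\|x_n - u\|^2$. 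Since condition (3) makes $\delta \mu_n\|x_n - x_{n+1}\|^2$ and $\delta\|u - Tu\|^2$ both nonnegative, they may be dropped from the left-hand side without breaking the inequality, leaving
\begin{equation*}
(\alpha + \beta)\, \mu_n \|x_n - Tu\|^2 + (\beta + \gamma)\, \mu_n \|x_n - u\|^2 \leq 0.
\end{equation*}
Conditions (1) and (2), exactly as in the argument leading to (\ref{inq1.2}) in the proof of Theorem \ref{thm3.1}, then force $-(\beta+\gamma)/(\alpha+\beta) \leq 1$, and dividing through by $\alpha + \beta > 0$ delivers $\mu_n\|x_n - Tu\|^2 \leq \mu_n\|x_n - u\|^2$ for every $u \in C$. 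Theorem \ref{th0} then produces a fixed point of $T$ in $C$.

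The main obstacle I anticipate is the sign analysis for $\beta + \gamma$: in Theorem \ref{thm3.1} the existence of a fixed point was used to conclude $\beta + \gamma \leq 0$, but here that information is not available a priori. I would handle the sub-case $\beta + \gamma > 0$ separately, observing that in that sub-case the displayed Banach-limit inequality immediately forces $(\alpha+\beta)\mu_n\|x_n - Tu\|^2 \leq 0$, hence $\mu_n\|x_n - Tu\|^2 = 0 \leq \mu_n\|x_n - u\|^2$, so Theorem \ref{th0} still applies. A secondary technicality is that Theorem \ref{th0} requires the underlying space to be uniformly convex, so this regularity is implicitly needed to close the argument.
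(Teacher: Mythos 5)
Your proposal is correct and follows essentially the same route as the paper: substitute the orbit $T^nu$ into the defining inequality, apply a Banach limit and exploit its shift-invariance, drop the nonnegative $\delta$-terms, and feed the resulting inequality $\mu_n\|T^nu-Ty\|^2\leq\mu_n\|T^nu-y\|^2$ into Theorem \ref{th0}. Your two closing observations are both well taken: the sign of $\beta+\gamma$ is indeed not available a priori here (though condition (1) alone gives $-(\beta+\gamma)/(\alpha+\beta)\leq 1$, which suffices since $\mu_n\|T^nu-y\|^2\geq 0$, so your case split can be avoided), and Theorem \ref{th0} does require $E$ to be uniformly convex --- a hypothesis the paper uses in its proof but omits from the statement of this theorem.
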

\begin{proof}
$(i)\Longrightarrow (ii)$: It is obvious. \newline
 $(ii)\Longrightarrow (i)$:
Since $T$ is an $(\alpha,\beta,\gamma,\delta)$-symmetric generalized
hybrid self mapping of $C$, then the inequality \eqref{thm1.1} is
satisfied. Let $u\in C$ such that $\{T^nu\}$ is bounded. Replacing
$x$ by $T^{n} u$  in \eqref{thm1.1}, we have
\begin{equation*}\begin{aligned}
\alpha\|T^{n+1}u-Ty\|^2&+\beta\|T^nu-Ty\|^2\\
&\leq-\beta\|T^{n+1}u-y\|^2-\gamma\|T^nu-y\|^2 -\delta(\|T^{n}
u-T^{n+1} u\|^2+\|y-Ty\|^2)\\
&\leq -\beta\|T^{n+1}u-y\|^2-\gamma\|T^nu-y\|^2,
\end{aligned}\end{equation*}
for all $y\in C$ and $n\in\mathbb{N}$. Since $\{T^nu\}$ is bounded,
by taking a Banach limit $\mu$ on both sides the last inequlity, we
get
\begin{equation*}\begin{aligned}
\mu_n(\alpha\|T^{n+1}u-Ty\|^2&+\beta\|T^nu-Ty\|^2)\\
&\leq\mu_n(-\beta\|T^{n+1}u-y\|^2-\gamma\|T^nu-y\|^2).
\end{aligned}\end{equation*}
So, by using the properties of Banach limit, we have
\begin{equation*}\begin{aligned}
\alpha\mu_n\|T^{n}u-Ty\|^2&+\beta\mu_n\|T^nu-Ty\|^2\\
&\leq-\beta\mu_n\|T^{n}u-y\|^2-\gamma\mu_n\|T^nu-y\|^2.
\end{aligned}\end{equation*}
From the last inequality, we can conclude that
\begin{equation*}
(\alpha+\beta)\mu_n\|T^{n}u-Ty\|^2\leq-(\beta+\gamma)\mu_n\|T^nu-y\|^2,
\end{equation*}
Similar to the proof of Theorem \ref{thm3.1}, we derive that
\begin{equation*}
\mu_n\|T^{n}u-Ty\|^2\leq\mu_n\|T^nu-y\|^2,
\end{equation*}
for all $y\in C$. So Theorem \ref{th0} implies that $T$ has a fixed
point.

\end{proof}

  \begin{thm}\label{3.4}
  Let $C$ be a nonempty closed convex subset of a Banach space $E$ satisfying Opial's condition.
  Assume that  $T$ be an $(\alpha,\beta,\gamma,\delta)$-symmetric generalized hybrid self mapping
  of  $C$ such that the conditions (1) $\alpha+2\beta+\gamma\geq0$, (2) $\alpha+\beta>0 $, (3) $\beta \leq 0$
    and (4) $\delta\geq 0$ hold. Then $I-T$ is demiclosed (at $0$), i.e., $x_n\rightharpoonup u$ and
    $x_n-Tx_n\rightarrow0$ imply $u\in F(T).$
\begin{proof}
Since $T$ is an $(\alpha,\beta,\gamma,\delta)$-symmetric generalized
hybrid  self mapping of $C$, so the inequality \eqref{thm1.1} is
satisfied. Assume that $x_n\rightharpoonup u$ and
$x_n-Tx_n\rightarrow0$. Since $x_n\rightharpoonup u$, we can conclude that $\{x_n\}$ is
bounded and by $\lim_{n\to\infty}\|x_n-Tx_n\|=0$ we obtain that $\{Tx_n\}$ is
bounded. Substituting $x$ and $y$ by $x_n$ and $u$ in
\eqref{thm1.1}, respectively, we have
\begin{equation*}
 \begin{aligned}
\alpha\|Tx_n-Tu\|^2+\beta(\|x_n-Tu\|^2&+\|Tx_n-u\|^2)+\gamma\|x_n-u\|^2\\
&+\delta(\|x_n-Tx_n\|^2+\|u-Tu\|^2)\leq0.
\end{aligned}
\end{equation*}
Therefore
\begin{equation*}
\alpha\|Tx_n-Tu\|^2\leq-\beta\|x_n-Tu\|^2-\beta\|Tx_n-u\|^2-\gamma\|x_n-u\|^2,
\end{equation*}
and hence
\begin{equation*}
 \begin{aligned}
\alpha\|Tx_n-Tu\|^2\leq-\beta(\|Tx_n-x_n\|&+\|x_n-u\|)^2-\gamma\|x_n-u\|^2\\
&-\beta(\|x_n-Tx_n\|+\|Tx_n-Tu\|)^2.
\end{aligned}
\end{equation*}
So, we can conclude that get
\begin{equation}\label{inq}
 \begin{aligned}
\|Tx_n-Tu\|^2&\leq\frac{-(\beta+\gamma)}{\alpha+\beta}\|x_n-u\|^2-\frac{2\beta}{\alpha+\beta}\|x_n-Tx_n\|^2\\
&\hspace{.5cm}-\frac{2\beta}{\alpha+\beta}(\|x_n-u\|+\|Tx_n-Tu\|)\|Tx_n-x_n\|\\
&\leq\|x_n-u\|^2-\frac{2\beta}{\alpha+\beta}\|x_n-Tx_n\|^2\\
&\hspace{.5cm}-\frac{2\beta}{\alpha+\beta}(\|x_n-u\|+\|Tx_n-Tu\|)\|Tx_n-x_n\|.
\end{aligned}
\end{equation}
Assume that $Tu\neq u$. So by using boundedness of $\{x_n\}$ and $\{Tx_n\}$, Opial's condition and (\ref{inq}), we have
\begin{equation}
\begin{aligned}
\liminf_{n\to\infty}\|x_n-u\|^2&<\liminf_{n\to\infty}\|x_n-Tu\|^2\\
&=\liminf_{n\to\infty}\|Tx_n-Tu\|^2\\
&\leq\liminf_{n\to\infty}(\leq\|x_n-u\|^2-\frac{2\beta}{\alpha+\beta}\|x_n-Tx_n\|^2\\
&\hspace{2cm}-\frac{2\beta}{\alpha+\beta}(\|x_n-u\|+\|Tx_n-Tu\|)\|Tx_n-x_n\|)\\
&\leq\liminf_{n\to\infty}\|x_n-u\|^2,
\end{aligned}
\end{equation}
which is a contradiction. Hence we get $Tu=u$ and therefore $I-T$ is
demiclosed.
\end{proof}
\end{thm}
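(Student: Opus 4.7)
The strategy is to assume, toward a contradiction, that $Tu \neq u$ and then derive a strict inequality of the form $\liminf_n \|x_n - u\| < \liminf_n \|x_n - u\|$ by chaining Opial's condition with the defining hybrid inequality. I would begin by substituting $x = x_n$ and $y = u$ into \eqref{thm1.1}. Since $\delta \geq 0$ (condition (4)), the term $-\delta(\|x_n - Tx_n\|^2 + \|u - Tu\|^2)$ on the right is non-positive and may be dropped, leaving
\begin{equation*}
\alpha\|Tx_n - Tu\|^2 \leq -\beta\bigl(\|x_n - Tu\|^2 + \|Tx_n - u\|^2\bigr) - \gamma\|x_n - u\|^2.
\end{equation*}

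Next, since $-\beta \geq 0$ by condition (3), I can legitimately enlarge the right-hand side using the triangle inequalities $\|x_n - Tu\| \leq \|x_n - Tx_n\| + \|Tx_n - Tu\|$ and $\|Tx_n - u\| \leq \|Tx_n - x_n\| + \|x_n - u\|$, squaring each and noting that the inequality is preserved under multiplication by the non-negative $-\beta$. Expanding the squares, collecting $\|Tx_n - Tu\|^2$ on the left, and dividing by $\alpha + \beta > 0$ (condition (2)) should yield an estimate of the shape
\begin{equation*}
\|Tx_n - Tu\|^2 \leq \frac{-(\beta+\gamma)}{\alpha+\beta}\,\|x_n - u\|^2 + R_n,
\end{equation*}
where $R_n$ is a linear combination of $\|x_n - Tx_n\|^2$ and cross terms of the form $\|x_n - Tx_n\|\bigl(\|x_n - u\| + \|Tx_n - Tu\|\bigr)$. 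Conditions (1) and (2) jointly give $0 \leq -(\beta+\gamma)/(\alpha+\beta) \leq 1$, and since $\{x_n\}$ is bounded (being weakly convergent) and $\|x_n - Tx_n\| \to 0$, both $\{Tx_n\}$ and $\{\|Tx_n - Tu\|\}$ are bounded, forcing $R_n \to 0$.

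For the endgame, suppose $Tu \neq u$. Opial's condition applied with $y = Tu$ yields $\liminf_n \|x_n - u\| < \liminf_n \|x_n - Tu\|$, and the triangle inequality together with $\|x_n - Tx_n\| \to 0$ gives $\liminf_n \|x_n - Tu\| \leq \liminf_n \|Tx_n - Tu\|$. Combining with the previous display produces
\begin{equation*}
\liminf_{n\to\infty}\|x_n - u\|^2 < \liminf_{n\to\infty}\|Tx_n - Tu\|^2 \leq \liminf_{n\to\infty}\|x_n - u\|^2,
\end{equation*}
a contradiction; hence $Tu = u$ and $I - T$ is demiclosed at $0$. The main obstacle is the sign bookkeeping in the second step: condition (3), $\beta \leq 0$, is exactly what legitimizes replacing $\|x_n - Tu\|$ and $\|Tx_n - u\|$ by triangle-inequality upper bounds inside a term premultiplied by $-\beta$, and condition (1) is exactly what guarantees the coefficient on $\|x_n - u\|^2$ is at most $1$ after dividing by $\alpha + \beta$; if either fails, the chain collapses before Opial's condition can be invoked.
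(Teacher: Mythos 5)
Your proposal is correct and follows essentially the same route as the paper's own proof: substitute $x=x_n$, $y=u$ into the defining inequality, drop the $\delta$-term, enlarge the $-\beta$ terms via the triangle inequality, divide by $\alpha+\beta>0$, bound the coefficient of $\|x_n-u\|^2$ by $1$ using condition (1), and conclude with Opial's condition. The only quibble is that conditions (1) and (2) give only the upper bound $-(\beta+\gamma)/(\alpha+\beta)\leq 1$ (the lower bound $\geq 0$ need not hold here, unlike in the quasi-nonexpansiveness theorem where it follows from $F(T)\neq\emptyset$), but since only the upper bound is used, the argument is unaffected.
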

\begin{thm}\label{3.5}
Let $C$ be a nonempty closed convex subset of a uniformly convex Banach space $E$ satisfying Opial's condition.
 Assume that  $T$ is an $(\alpha,\beta,\gamma,\delta)$-symmetric generalized hybrid self mapping of $C$ such that
  $F(T)\neq\phi$ and the conditions (1)  $\alpha+2\beta+\gamma\geq0$, (2) $\alpha+\beta>0 $, (3) $\beta \leq 0$
    and (4) $\delta\geq 0$ hold.  Assume that $\{x_n\}$ is a sequence generated
by
\begin{equation*}
\begin{cases}
&x_1=x\in C,\\
&y_n=(1-\lambda_n)x_n+\lambda_nTx_n,\\
&x_{n+1}=(1-\gamma_n)x_n+\gamma_nTy_n,
\end{cases}
\end{equation*}
where $0\leq\lambda_n\leq1$, $0<a\leq\gamma_n\leq1$ for all $n\in\mathbb{N}$ and
$\liminf_{n\to\infty}\lambda_n(1-\lambda_n)>0$. Then $x_n\rightharpoonup x_0\in F(T)$.
\end{thm}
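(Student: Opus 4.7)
\textbf{Proof plan for Theorem \ref{3.5}.} The plan follows the standard pattern for weak-convergence proofs that combine a Fej\'er-type monotonicity estimate, Xu's uniform convexity inequality (Theorem \ref{thm1}) to force $\|x_n-Tx_n\|\to 0$, demiclosedness of $I-T$ (Theorem \ref{3.4}), and Opial's condition to rule out two distinct weak cluster points.

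First I would establish that $\{x_n\}$ is bounded and that $\lim_{n\to\infty}\|x_n-p\|$ exists for every $p\in F(T)$. Since the hypotheses of Theorem \ref{thm3.1} are in force, $T$ is quasi-nonexpansive, so $\|Tx_n-p\|\leq\|x_n-p\|$ and $\|Ty_n-p\|\leq\|y_n-p\|$. The convexity of $\|\cdot\|$ then gives $\|y_n-p\|\leq\|x_n-p\|$ and $\|x_{n+1}-p\|\leq\|x_n-p\|$, whence $\{\|x_n-p\|\}$ is non-increasing and convergent, and $\{x_n\},\{Tx_n\},\{y_n\},\{Ty_n\}$ all lie in some ball $B_r$.

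Next I would show $\|x_n-Tx_n\|\to 0$, which I expect to be the main technical step. Fix $p\in F(T)$ and apply Theorem \ref{thm1} to the $y_n$-update:
\begin{equation*}
\|y_n-p\|^2\leq(1-\lambda_n)\|x_n-p\|^2+\lambda_n\|Tx_n-p\|^2-\lambda_n(1-\lambda_n)g(\|x_n-Tx_n\|).
\end{equation*}
Using quasi-nonexpansiveness this simplifies to $\|y_n-p\|^2\leq\|x_n-p\|^2-\lambda_n(1-\lambda_n)g(\|x_n-Tx_n\|)$. Feeding this into the convex estimate
\begin{equation*}
\|x_{n+1}-p\|^2\leq(1-\gamma_n)\|x_n-p\|^2+\gamma_n\|Ty_n-p\|^2\leq(1-\gamma_n)\|x_n-p\|^2+\gamma_n\|y_n-p\|^2
\end{equation*}
yields $\|x_{n+1}-p\|^2\leq\|x_n-p\|^2-\gamma_n\lambda_n(1-\lambda_n)g(\|x_n-Tx_n\|)$, so
\begin{equation*}
\sum_{n=1}^{\infty}\gamma_n\lambda_n(1-\lambda_n)g(\|x_n-Tx_n\|)<\infty.
\end{equation*}
Since $\gamma_n\geq a>0$ and $\liminf_{n\to\infty}\lambda_n(1-\lambda_n)>0$, this forces $g(\|x_n-Tx_n\|)\to 0$, and the strict monotonicity and continuity of $g$ with $g(0)=0$ give $\|x_n-Tx_n\|\to 0$.

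Finally I would extract a weakly convergent subsequence (possible since a uniformly convex space is reflexive and $\{x_n\}$ is bounded), say $x_{n_k}\rightharpoonup u$. Theorem \ref{3.4} applied to $\{x_{n_k}\}$ gives $u\in F(T)$. To promote subsequential weak convergence to convergence of the whole sequence, I would use the classical Opial argument: if $x_{n_k}\rightharpoonup p_1$ and $x_{m_j}\rightharpoonup p_2$ with $p_1\neq p_2$ and both in $F(T)$, then by Step 1 the limits $L_i:=\lim_{n\to\infty}\|x_n-p_i\|$ exist, and Opial's condition yields
\begin{equation*}
L_1=\liminf_{k}\|x_{n_k}-p_1\|<\liminf_{k}\|x_{n_k}-p_2\|=L_2=\liminf_{j}\|x_{m_j}-p_2\|<\liminf_{j}\|x_{m_j}-p_1\|=L_1,
\end{equation*}
a contradiction. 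Hence the weak cluster point is unique, and $x_n\rightharpoonup x_0$ for some $x_0\in F(T)$. The delicate point is keeping the two-step Ishikawa estimate sharp enough that the $g(\|x_n-Tx_n\|)$ term survives after one bound by quasi-nonexpansiveness and one bound by Xu's inequality; everything else is a routine assembly of the above facts.
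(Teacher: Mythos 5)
Your proposal is correct and follows essentially the same route as the paper's proof: quasi-nonexpansiveness from Theorem \ref{thm3.1} gives the Fej\'er monotonicity and boundedness, Xu's inequality (Theorem \ref{thm1}) applied to the inner Ishikawa step forces $\|x_n-Tx_n\|\to 0$ via the telescoping estimate, and demiclosedness (Theorem \ref{3.4}) together with the standard Opial double-subsequence argument yields weak convergence to a fixed point. The only cosmetic difference is that you phrase the key estimate as summability of $\gamma_n\lambda_n(1-\lambda_n)g(\|x_n-Tx_n\|)$ while the paper reads off that each term tends to zero directly from $\|x_n-q\|^2-\|x_{n+1}-q\|^2\to 0$; the two are equivalent here.
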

\begin{proof}
Since $T$ is an $(\alpha,\beta,\gamma,\delta)$-symmetric generalized  hybrid mapping such that $F(T)\neq\phi$, so by Theorem $\ref{thm3.1}$, $T$ is quasi-nonexpansive. Then, for all $q\in
F(T)$ and all $n \in \mathbb{N}$,  we
have\begin{equation}\label{lm1.1}\begin{aligned}
    \|y_n-q\|
    &=\|(1-\lambda_n)x_n+\lambda_nTx_n-q\|\\
    &=\|(1-\lambda_n)(x_n-q)+\lambda_n\|Tx_n-q\|\\
    &\leq(1-\lambda_n)\|x_n-q\|-\lambda_n\|x_n-q\|\\
    &=\|x_n-q\|,
\end{aligned}
\end{equation}
and hence by using $(\ref{lm1.1})$, we get
\begin{equation}\label{inq1}
\begin{aligned}
    \|x_{n+1}-q\|
    &=\|(1-\gamma_n)x_n+\gamma_nTy_n-q\|\\
    &\leq(1-\gamma_n)\|x_n-q\|+\gamma_n\|Ty_n-q\|\\
    &\leq(1-\gamma_n)\|x_n-q\|+\gamma_n\|y_n-q\|\\
    &\leq\|x_n-q\|.
\end{aligned}
\end{equation}
Then, we can conclude that $\lim_{n\to\infty}\|x_n-q\|$ exists. So, $\{x_n\}$ and $\{y_n\}$ are bounded. Since $T$ is quasi-nonexpansive, $\{Tx_n\}$ and $\{Ty_n\}$ are also bounded. Let
$$ r=\max\{\sup_{n\in\mathbb{N}}\|x_n-q\|,\;\sup_{n\in\mathbb{N}}\|Tx_n-q\|,\;\sup_{n\in\mathbb{N}}\|y_n-q\|,\;\sup_{n\in\mathbb{N}}\|Ty_n-q\|\}.$$
Hence, by Theorem \ref{thm1}, there exists a strictly increasing, continuous and convex function $g :[0,\infty)\longrightarrow
[0,\infty)$ such that $g(0) = 0$ and
$$\|t x+(1-t)y\|^2\leq t\|x\|^2+(1-t)\|y\|^2-t(1-t)g(\|x-y\|),$$
for all $x, y \in B_r$
and $t$ with $0 \leq t \leq1$, where $B_r = \{z\in E : \|z\| \leq r\}$. Then, for all $q\in F(T), x\in C$ and $n\in\mathbb{N}$, we get
\begin{equation}\begin{aligned}
    \|y_n-q\|^2
    &=\|(1-\lambda_n)x_n+\lambda_nTx_n-q\|^2\\
    &=\|(1-\lambda_n)(x_n-q)+\lambda_n(Tx_n-q)\|^2\\
    &\leq(1-\lambda_n)\|x_n-q\|^2+\lambda_n\|x_n-q\|^2-\lambda_n(1-\lambda_n)g(\|x_n-Tx_n\|)\\
    &=\|x_n-q\|^2-\lambda_n(1-\lambda_n)g(\|x_n-Tx_n\|),
\end{aligned}
\end{equation}
and hence
\begin{equation}\label{inq1}
\begin{aligned}
    \|x_{n+1}-q\|^2
    &=\|(1-\gamma_n)x_n+\gamma_nTy_n-q\|^2\\
    &=\|(1-\gamma_n)(x_n-q)+\gamma_n(Ty_n-q)\|^2\\
    &\leq(1-\gamma_n)\|x_n-q\|^2+\gamma_n\|y_n-q\|^2-\gamma_n(1-\gamma_n)g(\|x_n-Ty_n\|)\\
    &\leq(1-\gamma_n)\|x_n-q\|^2+\gamma_n\|x_n-q\|^2-\gamma_n\lambda_n(1-\lambda_n)g(\|x_n-Tx_n\|)\\
    &-\gamma_n(1-\gamma_n)g(\|x_n-Ty_n\|)\\
    &\leq\|x_n-q\|^2-\gamma_n\lambda_n(1-\lambda_n)g(\|x_n-Tx_n\|).
\end{aligned}
\end{equation}
Since $0<a\leq\gamma_n\leq1,$
it is easy to see that
$$\|x_{n+1}-q\|^2\leq\|x_n-q\|^2-a\lambda_n(1-\lambda_n)g(\|x_n-Tx_n\|).$$
So,
$$ 0\leq a \lambda_n(1-\lambda_n)g(\|x_n-Tx_n\|)\leq\|x_n-q\|^2-\|x_{n+1}-q\|^2\rightarrow0,$$
as $n\rightarrow\infty$, since
$\liminf_{n\to\infty}\lambda_n(1-\lambda_n)>0$. Therefore
\begin{equation*}
\lim_{n\to\infty}g(\|x_n-Tx_n\|)=0.
\end{equation*}
From the properties of $g$, we get
\begin{equation}\label{eq2}
\lim_{n\to\infty}\|x_n-Tx_n\|=0.
\end{equation}
\par
Now, we conclude from boundedness of $\{x_n\}$  and reflexivity of $E$ that  there exists a subsequence $\{x_{n_i}\}$
of $\{x_n\}$ such that $x_{n_i}\rightharpoonup q\in C$. So Theorem \ref{3.4} and (\ref{eq2}) imply that
$Tq = q$. We will prove that the sequence $\{x_n\}$ converges weakly to some point
of $F(T)$. Suppose that  there exist  two subsequences $\{x_{n_i}\}$ and $\{x_{n_j}\}$
of $\{x_n\}$ such that $x_{n_i}\rightharpoonup q$ and $x_{n_i}\rightharpoonup p$. Assume that $q\neq p$. We know that $\lim_{n\to\infty}\|x_n-q\|$ and $\lim_{n\to\infty}\|x_n-p\|$ exist, since $q, p\in F (T)$. So, Opial's condition on  $E$ implies that
\begin{equation*}
\begin{aligned}
\lim_{n\to\infty}\|x_n-q\|=\lim_{i\to\infty}\|x_{n_i}-q\|<\lim_{i\to\infty}\|x_{n_i}-p\|=\lim_{n\to\infty}\|x_{n}-p\|
&=\lim_{j\to\infty}\|x_{n_j}-p\|\\
&<\lim_{j\to\infty}\|x_{n_j}-q\|\\&=\lim_{n\to\infty}\|x_{n}-q\|.
\end{aligned}
\end{equation*}
Which is a contradiction. Therefore, we obtain $q = p$. This yields
that $\{x_n\}$
 converges weakly to a point of $F(T)$.
\end{proof}

  \begin{thm}
 Let $C$ be a nonempty closed convex subset of a uniformly convex Banach space $E$ satisfying Opial's condition. Suppose that  $T$ is a hybrid self mapping of $C$ with $F(T)\neq\phi$.  Assume that $\{x_n\}$ is a sequence generated by
\begin{equation*}
\begin{cases}
&x_1=x\in C,\\
&y_n=(1-\lambda_n)x_n+\lambda_nTx_n,\\
&x_{n+1}=(1-\gamma_n)x_n+\gamma_nTy_n,
\end{cases}
\end{equation*}
where $0\leq\lambda_n\leq1$, $0 < a\leq\gamma_n\leq1$ for all $n\in\mathbb{N}$ and
$\liminf_{n\to\infty}\lambda_n(1-\lambda_n)>0$. Then $x_n\rightharpoonup x_0\in F(T)$.
\end{thm}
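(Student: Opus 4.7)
The plan is to realize this as an immediate corollary of Theorem \ref{3.5}. The introduction already records the identification that a hybrid mapping is precisely a $(3,-1,-1,0)$-symmetric generalized hybrid mapping, so once that observation is made the only thing to do is to verify that the parameters $(\alpha,\beta,\gamma,\delta)=(3,-1,-1,0)$ satisfy the four structural conditions assumed in Theorem \ref{3.5}.

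First I would invoke the definition of hybrid mapping and rewrite the defining inequality $3\|Tx-Ty\|^{2}\leq \|x-y\|^{2}+\|Tx-y\|^{2}+\|Ty-x\|^{2}$ in the symmetric generalized hybrid form \eqref{inq.}, making the identification $\alpha=3$, $\beta=-1$, $\gamma=-1$, $\delta=0$ explicit. Then I would check the four hypotheses of Theorem \ref{3.5} one by one: (1) $\alpha+2\beta+\gamma=3-2-1=0\geq 0$, (2) $\alpha+\beta=3-1=2>0$, (3) $\beta=-1\leq 0$, and (4) $\delta=0\geq 0$. All four hold, and the hypotheses on $E$, $C$, $F(T)$, $\{\lambda_n\}$ and $\{\gamma_n\}$ are identical in the two statements.

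With these verifications in hand, Theorem \ref{3.5} applies verbatim to the sequence $\{x_n\}$ generated by the Ishikawa scheme, yielding $x_n\rightharpoonup x_0\in F(T)$, which is the desired conclusion. There is essentially no obstacle here; the work was already done in Theorems \ref{thm3.1}, \ref{3.4} and \ref{3.5}, and the present statement is really just an explicit corollary recording the special case $(\alpha,\beta,\gamma,\delta)=(3,-1,-1,0)$ for the reader's convenience. If one wished to avoid a one-line proof, one could instead reproduce the argument of Theorem \ref{3.5} with the specific constants substituted, but this adds no mathematical content.
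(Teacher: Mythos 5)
Your proposal is correct and follows exactly the paper's own proof: the paper likewise observes that a hybrid mapping is a $(3,-1,-1,0)$-symmetric generalized hybrid mapping and invokes Theorem \ref{3.5}. Your explicit verification of the four parameter conditions is a slight (and welcome) elaboration, but the route is identical.
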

\begin{proof}
Since $T$ is a hybrid self mapping of $C$, so $T$ is a $(3,-1,-1,0)$-symmetric generalized hybrid mapping. Therefore by Theorem \ref{3.5}, we get the desired result.
\end{proof}
\begin{rk}
Since nonexpansive mappings are a $(1,0,-1,0)$-symmetric generalized
hybrid mapping and nonspreading mappings are a $(2,-1,0,0)$-symmetric
generalized hybrid mapping, then the Theorem \ref{3.5} holds for
these mappings.
\end{rk}

\end{document}